\definecolor{blue}{rgb}{0,0,0.8}
\definecolor{red}{rgb}{0.8,0,0}
\definecolor{darkgreen}{rgb}{0,0.6,0}
\def\beq{\begin{equation}}
\def\eeq{\end{equation}}
\newcounter{othm}
\def\theothm{\Alph{othm}}
\newcommand{\PP}{{\mathcal P}}
\newcommand{\PPP}{{\mathbb P}}
\newcommand{\B}{{\mathcal B}}
\newcommand{\zB}{\zeta_{\mathcal B}}
\newcommand{\zBs}{\zeta_{\mathcal B}(s)}
\newcommand{\lz}{\lambda_{\zeta}}
\newcommand{\lB}{\lambda_{\zB}}
\newcommand{\ee}{\eta}
\newcommand{\G}{{\mathcal G}}
\newcommand{\RR}{{\mathbb R}}
\newcommand{\NN}{{\mathbb N}}
\newcommand{\N}{{\mathcal N}}
\newcommand{\R}{{\mathcal R}}
\newcommand{\ve}{{\varepsilon}}
\newtheorem{theorem}{Theorem}
\newtheorem{corollary}{Corollary}
\newtheorem{lemma}{Lemma}
\newtheorem{definition}{Definition}
\theoremstyle{definition}
\newtheorem{remark}{Remark}
\newcommand{\de}{\delta}
\begin{document}

\title
{New zero-density estimates for the Beurling $\zeta$ function\thanks{This research was supported by Hungarian National Research, Development and Innovation Office, Project \#s KKP133819, K-147153 and
K-146387.}}

\author{J\'anos Pintz and Szil\' ard Gy. R\' ev\' esz}

\date{}

\maketitle

\begin{abstract}
In two previous papers the second author proved some Carlson type density theorems for zeroes in the critical strip for Beurling zeta functions satisfying Axiom A of Knopfmacher. In the first of these invoking two additonal conditions were needed, while in the second an explicit, fully general result was obtained. Subsequently, Frederik Broucke and Gregory Debruyne obtained, via a different method, a general Carlson type density theorem with an even better exponent, and recently Frederik Broucke improved this further, getting $N(\sigma,T) \le T^{a(1-\sigma)}$ with any $a>\dfrac{4}{1-\theta}$. Broucke employed a new mean value estimate of the Beurling zeta function, while he did not use the method of Hal\'asz and Montgomery.

Here we elaborate a new approach of the first author, using the classical zero detecting sums coupled with a kernel function technique and Halász' method, but otherwise arguing in an elementary way avoiding e.g. mean value estimates for Dirichlet polynomials. We will make essential use of the additional assumptions that the Beurling system of integers consists of natural numbers, and that the system satisfies the Ramanujan condition, too. This way we give a new variant of the Carlson type density estimate with similar strength as Turán's 1954 result for the Riemann $\zeta$ function, coming close even to the Density Hypothesis for $\sigma$ close to 1.

\end{abstract}

{\bf MSC 2020 Subject Classification.} Primary 11M41; Secondary 11M36, 30B50, 30C15.

{\bf Keywords and phrases.} {\it Beurling zeta function, analytic
continuation, zero of the Beurling zeta function, zero detecting sums, method of Halász, density estimates for zeta zeros.}

\medskip

\section{Zero density results for the Riemann zeta function}\label{sec:classicalintro}

In the classical case of natural numbers $\NN$ and primes $\PPP$, it is well-known from Riemann's times that the distribution of the zeroes of the (analytic continuation of) the Rieman zeta function $\zeta$ is decisive in regard of questions of prime distribution. As all the nontrivial zeroes are in the critical strip $0<\Re s <1$ and their number up to height $T$ is asymptotically $N(T) \sim \frac{T}{2\pi} \log T$, the estimations of the number $N(\sigma,T)$ of zeroes in a halfplane $\Re s >\sigma$ and up to height $T$ by quantities of smaller order could point towards validity of the Riemann Hypothesis. We are discussing here such so-called \emph{density estimates} first for the Riemann zeta function itself, and then, more generally, we use some of these results to obtain new density estimates for Beurling number systems as well.

Bohr and Landau \cite{BL1914} proved in 1914 that $N(\sigma,T)=o(N(T))$ if $\sigma>1/2$. This was the first estimate of the kind what we call density estimates.

A few years later Carlson \cite{Carlson} proved
\begin{equation}
\label{eq:1.2}
N(\sigma, T) \ll_\varepsilon T^{A(\sigma)(1 - \sigma) + \varepsilon } \ \text{ for any } \ \varepsilon  > 0 \ \text{ and } \ \sigma \geq 1/2,
\end{equation}
where he could put
\begin{equation}
\label{eq:1.3}
A(\sigma) \leq 4\sigma.
\end{equation}

Number theoretic consequences in connection with the Landau problem on the difference of consecutive primes made it of top interest to improve these bounds as much as possible. Landau formulated the conjecture that there is a prime between any two squares \cite{Lan1913}. In particular, it was shown that an almost exact positive answer to Landau's question would follow from the so-called "\emph{Density Hypothesis}" (DH) which asserts
\begin{equation}
\label{eq:1.8}
N(\sigma, T) \ll T^{2(1 - \sigma)} \log^C T \ \text{ with some }\ C > 0 \ \text{ for all } \sigma \geq \frac12,
\end{equation}
or, in a slightly weaker form, using the notation \eqref{eq:1.2},
\begin{equation}
\label{eq:1.9}
A(\sigma) \leq 2 \ \text{ for all } \ \sigma \geq 1/2 \ \ \left(\Longleftrightarrow N(\sigma, T) \ll_\varepsilon  T^{2(1 - \sigma) + \varepsilon }\right).
\end{equation}

Clearly, the Riemann Hypothesis contains the Density Hypothesis. However, Ingham \cite{Ing1937} could demonstrate DH from another, wekaer assumption, the so-called "\emph{Lindel\"of Hypothesis}" (LH), which formulates
\begin{equation}
\label{eq:1.10}
\mu\left(\frac12\right) = 0, \ \text{ where }\ \mu(\alpha) := \text{\rm inf} \left\{\mu; |\zeta(\sigma + it)| \leq T^\mu \text{ for } \sigma \geq \alpha, \, 1 < |t| \leq T\right\} .
\end{equation}

In 1954 Tur\'an \cite{Tur1954} used his celebrated power-sum method \cite{Tur1953}, see also \cite{Tur1984}, to give a different proof of Ingham's result that LH implies DH, and he almost achieved the DH in the vicinity of the boundary line $\sigma = 1$. Let us put
\begin{equation}
\label{eq:1.11}
s = \sigma + it, \ \ \eta:= 1-\sigma, \ \ B(\eta):=\frac{1}{\eta} A(1 - \eta) .
\end{equation}
Then Turán's result was with a small positive constant $c_1$ \cite{Tur1954}
\begin{equation}
\label{eq:1.12}
N(1 - \eta, T) \ll T^{2\eta + \eta^{1.14}} \log^6 T \ \text{ for } \ \eta < c_1,
\end{equation}
or with our new notation
\begin{equation}\label{Turan1954}
B(\eta) \leq 2 + \eta^{0.14} \ \text{ for } \eta < c_1.
\end{equation}
The next breakthrough improvement of estimates, yielding in particular DH for a fixed strip near the 1-line, was obtained in \cite{Hal-Tur-I} by Halász and Turán. In that work they combined Turán's celebrated power sum theory with the best known Korobov-Vinogradov estimates on the growth of the Riemann zeta function and Halász' pioneering idea \cite{Hal-Mittelwerte} which will play a decisive role in the main argument of this present work, too.

A special feature of our approach to the general case of Beurling systems is that we invoke our better developed knowledge about the behavior of the Riemann zeta function when addressing questions of zero density for certain Beurling systems. In particular, we will employ the best known growth bound, due to Bourgain \cite{Bourgain2017} 
\begin{equation}\label{Bourgain}
|\zeta(1/2+it)| \ll_\ve (|t|+1)^{\frac{13}{84}+\ve} \qquad  (\forall \ve >0).
\end{equation}

\section{Beurling systems and density theorems for Beurling $\zeta$ functions}\label{sec:Beurlingdensity}

Beurling systems were introduced by Beurling \cite{Beur}, and were subsequently put into a more abstract -- and hence more general -- framework, see e.g. \cite{Knopf}. Beurling systems thus can be viewed in two alternate ways, one being that they are just an arbitrary semigroup $\G$ of elements $g$, admitting an (essentially, i.e. up to ordering) unique factorization to prime elements $p \in \PP \subset \G$. In this "arithmetical semigroup" point of view the elements can be algebraic numbers, ideals of integer rings of algebraic number fields, finite Abelian groups, and many more systems which admit a suitable Krull-Schmidt type decomposition theorem leading to a multiplicative structure. In this abstract setting a key role is played by the multiplicative norming of elements, i.e, there is a norm $|\cdot| :\G \to [1,\infty)$ which is multiplicative ($|gh|=|g| |h|$) and its image is locally finite. That maps, in a  unique way, the abstract structure to $\RR_{+}$, so that one can view it as included in the real number setting where the primes $\PP$ are just an arbitrary sequence of reals, nondecreasing and tending to infinity, and freely generating the respective system of Beurling integers $\N$. Although we will  refer back to this first  interpretation here or there, the main setting of the paper is in line with the latter real number settings.

Let $\PP$ be a sequence of Beurling primes and $\N$ the generated semigroup, that is, the system of Beurling integers. Altogether, these form a Beurling number system $\B:=(\PP,\N)$. Due to its Euler product form, the Beurling zeta function does not vanish in its halfplane of convergence:
\begin{equation}\label{Euler}
\zB(s):=\sum_{n\in \N} \frac{1}{n^s} = \prod_{p \in \PP} \frac{1}{1-1/p^s} \ne 0.
\end{equation}
For a general overview of the analytic theory of Beurling number systems we refer to \cite{DZ-16}.

Beurling systems may or may not have an analytic continuation over the boundary line of convergence of their Dirichlet series expansion. However, if they do, then there are basically two possibilities: either the analytic continuation is governed by a nice asymptotic formula with power-type error for the number of integers $\N(x):=\# \{ n\le x ~:~ n \in \N\}$, or the Beurling zeta function must show extremely irregular behavior with large values and no reasonable (polynomial) bounds. This second possibility was pointed out by Frederik Broucke; for an explanation see e.g. Remark 4.1 of \cite{abc}. In turn, if $\zB$ does not admit polynomial bounds in any larger halfplane than the halfplane of convergence, then there is no hope to control its behavior and to derive finer results of zero- and hence prime distributions.

Therefore, to talk about analytic continuation of $\zB$ and to have a chance to come up with a successful analysis using it even in the critical strip, we assume that the integers are "well-behaved". That is, in this work we assume the so-called \emph{"Axiom A"} (in its normalized form to $\delta=1$) of Knopfmacher, see pages 73-79 of his fundamental book \cite{Knopf}.

\begin{definition}[Axiom A] It is said that ${\N}$ (or, loosely speaking, $\zB$)
satisfies \emph{Axiom A} -- more precisely, Axiom $A(A,\kappa,\theta)$ with the suitable constants $A, \kappa>0$ and $0 \le \theta<1$ -- if for the remainder term $\R(x):= \N(x)-\kappa x$ it holds
the estimate
\begin{equation}\label{AxiomA}
\left| \R(x) \right|  \leq A x^{\theta} \qquad (x \geq 1).
\end{equation}
\end{definition}

Historically, the first result which could be considered a weak form of a density estimate, was worked out by Kahane in \cite{K-99}. However, it was part of an indirect proof and the assumption he made was a very strong number theoretical condition. Under that restrictive assumption Kahane could prove, however, that on a given line $\Re s=a$ in the critical strip, with $a>(1+\theta)/2$, the number of $\zB$-zeros is $O(T)$.

The first zero density result for general Beurling zeta functions was obtained in \cite{Rev-D} under two extra assumptions on the Beurling system $\B$. One was that the Beurling number system remains within the realm of natural numbers -- a strong, but useful assumption, emphasized also by Knopfmacher, see pages 57--58 of \cite{Knopf}. We will term this as \emph{the integrality condition}, meaning that $|\cdot|:\G\to\NN$, that is, the norm $|g|$ of any element $g\in\G$ is a natural number. Alternatively, we may take it as $\N \subset \NN$.

Further, in \cite{Rev-D} an averaged form of the Ramanujan condition was assumed in proving the density bound
\begin{equation}\label{Mathematika}
B_\B(\eta)\le \frac{6-2\theta}{1-\theta}.
\end{equation}
In the present work we will also need a form of the Ramanujan condition, but given that our main reference \cite{Pintz2024} used it in its sharper, pointwise version, we settle with this version here, too. For its formulation, let us introduce the arithmetical function $G(\nu):= \# \{n \in \N ~:~ |n|=\nu\}$, the number of Beurling integers having a given norm (value). With that the Beurling zeta function can be written as
\begin{equation}\label{BeurlingzetawithG}
\zBs = \sum_{n\in \N} \frac{1}{n^s} = \sum_{\nu=1}^{\infty} \frac{G(\nu)}{\nu^s}.
\end{equation}
Then the Beurling system, or, loosely speaking, $\zB$ satisfies the Ramanujan condition, if $\log G(\nu) =o(\log \nu)$, that is, if for any $\delta>0$ we have $G(\nu)\le \nu^\delta$ for $\nu>\nu_0(\de)$.

The two extra conditions (integrality and the averaged Ramanujan condition) of the result \eqref{Mathematika} were removed in \cite{Rev-Arxiv} at the expense of a worse exponent. Almost simultaneously and independently, however, Frederik Broucke and Gregory Debruyne \cite{BrouckeDebruyne} succeeded in proving an improved bound also without relying on these additional assumptions. Moreover, they also showed that in the generality of assuming only Axiom A, there are Beurling zeta functions $\zB$ with $B_\B(\eta)>c_1$ with some sufficiently small positive constant $c_1$. Finally, very recently \cite{Frederikms} Frederik Broucke further improved the exponent in a general density theorem for Beurling zeta functions satisfying Axiom A, obtaining
\begin{equation}\label{Broucke2024}
B_\B(\eta) \le \frac{4}{1+2\eta-\theta}.
\end{equation}

At the end of this introduction let us point out that the new development of having zero density theorems for zeta functions of well-behaved systems of Beurling integers and primes opened up the way to achieve many strong number theoretical advances in Beurling's theory. These advances are likely to be totally impossible to reach without such tools. The interested reader may consult \cite{Rev-One, Rev-Many, BrouckeDebruyne, abc}.

\section{The aim of the paper}

Our present goal is to demonstrate that returning to the extra assumptions of integrality and the Ramanujan Condition, a new result, significantly stronger than \eqref{Broucke2024}, can be obtained. For example, if $\theta=0$, then we can almost reach the DH for small values of $\eta$, similarly to the 1954 result of Turán for the classical case. See below in Corollary \ref{Corollarythetazero}

It is of interest that while Turán used his power sum method to show \eqref{Turan1954}, here we need only Halász' idea \cite{Hal-Mittelwerte} to show a general form of \eqref{thetazero}, somewhat weaker if $\theta>0$. Namely, we will prove

\begin{theorem}\label{ThmGeneral} Under Axiom A, the integrality condition, and the Ramanujan condition  for a Beurling system $\B$, we have, as long as $0<\eta< 1-\theta$, the estimates
\begin{equation}\label{BetaforBeurling}
B_\B(\eta)=\begin{cases} \frac{2}{1-\theta-\eta} \qquad & \text{if} \quad 0< \eta < \min\left(\frac{4}{29}, \frac{8+13\theta}{71} \right)
\\
\frac{26/21}{1-4\eta} & \text{if} \quad \frac{8+13\theta}{71} \le \eta < \frac{4}{29}
\qquad ( \text{empty for} \quad \theta \ge \frac{4}{29} ),
\\  \frac{2}{1-\theta-\eta}  & \text{if} \quad \frac{4}{29}  \le \eta  < \theta \qquad ( \text{empty for} \quad \theta < \frac{4}{29} ),
\\ \frac{2}{1-2\eta} & \text{if} \quad \max\left(\theta,\frac{4}{29}\right) \le \eta
\end{cases} .
\end{equation}
\end{theorem}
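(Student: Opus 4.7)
The plan is to implement the first author's approach: zero-detecting sums combined with a kernel function technique and Halász' method, while deliberately avoiding mean-value theorems for Dirichlet polynomials. First I would reduce to a well-spaced family of zeros $\rho_1=\beta_1+i\gamma_1,\ldots,\rho_Z=\beta_Z+i\gamma_Z$ of $\zB$ with $\beta_j \geq 1-\eta$, $|\gamma_j| \leq T$, and $|\gamma_j-\gamma_k|\geq 1$ for $j\neq k$; a standard packing argument then gives $N_\B(1-\eta,T) \ll Z \log T$, so it suffices to bound $Z$. Next I would set up a zero-detecting sum via a truncated Beurling--M\"obius inverse $M_Y(s)=\sum_{n \leq Y,\, n \in \N}\mu_\B(n)\, n^{-s}$ together with a Mellin-smooth kernel $K$ of rapid decay. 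A contour shift, using $\zB(\rho_j)=0$ and Axiom~A to control the integrand on the shifted line, yields
\begin{equation*}
|D_X(\rho_j)| \gg 1, \qquad D_X(s)=\sum_{n\in\N,\; n\sim X} a_n\, n^{-s},
\end{equation*}
where $X$ is a free parameter and $|a_n|\leq n^{\varepsilon}$ by the Ramanujan condition.

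The heart of the proof is the Halász bilinear estimate. Taking a Fej\'er-type majorant $\phi\geq \mathbf{1}_{[-1,1]}$ and using the separation $|\gamma_j-\gamma_k|\geq 1$, I would expand
\begin{equation*}
Z \ll \Biggl|\sum_{j,k=1}^{Z} D_X(\rho_j)\overline{D_X(\rho_k)}\phi(\gamma_j-\gamma_k)\Biggr|
=\Biggl|\sum_{m,n\in\N,\; m,n\sim X} \frac{a_m\overline{a_n}}{(mn)^{\sigma}}\, S(m,n)\Biggr|,
\end{equation*}
with $S(m,n)=\sum_{j,k}(m/n)^{-i(\gamma_j-\gamma_k)}\phi(\gamma_j-\gamma_k)$. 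The key trick, replacing the usual mean-value step, is to read the empirical measure of zeros against the phase $(m/n)^{-i\tau}$ as a contour integral and use $\N\subset\NN$ to treat $\zB$ as an ordinary Dirichlet series on $\NN$. Together with the Ramanujan pointwise bound $G(\nu)\ll \nu^{\delta}$, this lets the size of $S(m,n)$ on the critical line be controlled by Bourgain's estimate \eqref{Bourgain}.

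Three analytic inputs then compete. For small $\eta$, the Axiom~A growth bound $|\zB(\sigma+it)|\ll (1+|t|)^{(1-\sigma)/(1-\theta)+\varepsilon}$ in the strip $\theta<\sigma\leq 1$ is the binding constraint and delivers $B_\B(\eta)\leq 2/(1-\theta-\eta)$. For intermediate $\eta$, the Bourgain exponent $13/84$ dominates and produces $B_\B(\eta)\leq (26/21)/(1-4\eta)$. For large $\eta$, a cruder Ingham-style argument (not using the Riemann zeta inputs) gives $B_\B(\eta)\leq 2/(1-2\eta)$. The four intervals in \eqref{BetaforBeurling} and the breakpoints $\eta=(8+13\theta)/71$, $4/29$, $\theta$ then fall out by taking the pointwise minimum of the three estimates: a direct computation shows that the first and second estimates are equal at $\eta=(8+13\theta)/71$, the second and third coincide at $\eta=4/29$, and the first and third switch dominance at $\eta=\theta$.

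The main obstacle I anticipate is the Halász step: estimating $S(m,n)$ through pointwise values of $\zB$ or $\zeta$ without invoking mean-value theorems forces a delicate choice of the kernel $K$ and the test function $\phi$, and the free parameters $X$ and $Y$ must be tuned carefully so that the three competing estimates actually attain the sharp thresholds $(8+13\theta)/71$ and $4/29$. It is here that all three hypotheses -- Axiom~A, integrality $\N\subset\NN$, and Ramanujan -- must enter simultaneously and in a compatible way.
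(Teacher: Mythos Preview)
Your proposal diverges from the paper in two significant ways, one structural and one substantive.

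\textbf{Structure.} The paper does \emph{not} develop the zero-detecting/Hal\'asz machinery from scratch here. The heavy lifting is packaged into Lemma~\ref{Thm:Pintzdensity} (a slight sharpening of the general density theorem from \cite{Pintz2024}), which is quoted as a black box. The proof of Theorem~\ref{ThmGeneral} then consists only of (i) computing an estimator $\lambda_{\zeta_{\mathcal B}}(\eta)=1/(1-\theta-\eta)$ from the Axiom~A growth bound \eqref{mubetaepsilontheta}, (ii) computing an estimator $\lambda_\zeta(2\eta)$ from Bourgain's bound for $\eta\le 4/29$ and from $\mu_\zeta(0)\le 1/2$ for $\eta\ge 4/29$, (iii) checking condition~\eqref{crucialcondition}, and (iv) reading off $B_{\mathcal B}(\eta)=\max\bigl(2\lambda_{\zeta_{\mathcal B}}(\eta),\,4\lambda_\zeta(2\eta)\bigr)$ and sorting the cases. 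What you sketch is essentially an attempt to reprove the content of Lemma~\ref{Thm:Pintzdensity} inside the present paper, which is a much larger undertaking than what the paper actually does.

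\textbf{A genuine gap.} Your final assembly step is wrong. You write that ``the four intervals \ldots\ fall out by taking the pointwise minimum of the three estimates,'' but this does not reproduce \eqref{BetaforBeurling}. The three expressions $\frac{2}{1-\theta-\eta}$, $\frac{26/21}{1-4\eta}$, $\frac{2}{1-2\eta}$ are \emph{not} three independent upper bounds for $B_{\mathcal B}(\eta)$ among which one may take a minimum. In the lemma's output the density exponent is a \emph{maximum}, $B_{\mathcal B}(\eta)=\max\bigl(2\lambda_{\zeta_{\mathcal B}}(\eta),\,4\lambda_\zeta(2\eta)\bigr)$; the two $\zeta$-based expressions are both admissible choices for $4\lambda_\zeta(2\eta)$ (so between \emph{those two} one may take the smaller), but the Beurling term $2\lambda_{\zeta_{\mathcal B}}(\eta)=\frac{2}{1-\theta-\eta}$ must then be \emph{maximised} against the result. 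Concretely, at $\theta=0$, $\eta=1/10$ one has $\frac{2}{1-\eta}=20/9\approx 2.22$, $\frac{26/21}{1-4\eta}\approx 2.06$, $\frac{2}{1-2\eta}=2.5$; your ``minimum'' gives $2.06$, but the theorem asserts $20/9$. The correct combination is $\max\bigl(\tfrac{2}{1-\theta-\eta},\,\min(\tfrac{26/21}{1-4\eta},\tfrac{2}{1-2\eta})\bigr)$, and it is this that yields the breakpoints $(8+13\theta)/71$, $4/29$, $\theta$. Relatedly, your description of the bound $\frac{2}{1-2\eta}$ as coming from ``a cruder Ingham-style argument (not using the Riemann zeta inputs)'' mischaracterises its origin: in the paper it is precisely $4\lambda_\zeta(2\eta)$ with the classical $\mu_\zeta(0)\le 1/2$, i.e.\ still the Riemann-$\zeta$ term of the Hal\'asz step, just with a weaker subconvexity input.
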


We did not directly insert into the definition of the various ranges for $\eta$ the generally required condition that $\eta<1-\theta$. Note that taking into account that restriction, or even already in view of the conditions mentioned, the actual ranges for the various estimates may well be empty for specific values of the parameter $\theta$; only the very first range, -- written out fully as $0<\eta<\min\left(1-\theta,\frac{4}{29},\frac{8+13\theta}{71}\right)$ -- needs to be nonempty for all values of $\theta \in [0,1)$.

Specializing for $\theta=0$ and for small $\eta$ only, we obtain

\begin{corollary}\label{Corollarythetazero} Let $\B$ be a Beurling number system satisfying the integrality condition and also Axiom A with $\theta=0$. Then for the Beurling zeta function $\zB$ of this system we have a zero density bound with
\begin{equation}\label{thetazero}
B_\B(\eta)=\frac{2}{1-\eta} \left( = 2+O(\eta)\right) \qquad \textrm{for} \quad \eta \le \frac{8}{71}.
\end{equation}
\end{corollary}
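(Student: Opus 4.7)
The plan is to derive Corollary \ref{Corollarythetazero} as an immediate specialization of Theorem \ref{ThmGeneral} at $\theta = 0$. Setting $\theta = 0$ in the piecewise bound \eqref{BetaforBeurling}, the threshold governing the first branch becomes
\[
\min\!\left(\tfrac{4}{29},\ \tfrac{8+13\theta}{71}\right)\Big|_{\theta=0} \;=\; \min\!\left(\tfrac{4}{29},\ \tfrac{8}{71}\right) \;=\; \tfrac{8}{71},
\]
since $8/71 \approx 0.1127 < 0.1379 \approx 4/29$; the ambient restriction $\eta < 1-\theta = 1$ is vacuous here. Thus the first branch is in force throughout $0 < \eta < 8/71$ and yields $B_\B(\eta) \le 2/(1-\theta-\eta)|_{\theta=0} = 2/(1-\eta)$, which is precisely the bound asserted. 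At the endpoint $\eta = 8/71$ the first and second branch formulas agree (both evaluate to $142/63$), so the estimate extends to the closed interval $0 < \eta \le 8/71$ as claimed.

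I would remark in passing that the Ramanujan condition --- formally a hypothesis of Theorem \ref{ThmGeneral} --- is automatic once integrality is interpreted as $\N \subset \NN$, since then each natural number $\nu$ supports at most one Beurling integer and $G(\nu) \in \{0,1\}$; hence its omission from the statement of the corollary is harmless. The parenthetical asymptotic expansion $2/(1-\eta) = 2 + 2\eta/(1-\eta) = 2 + O(\eta)$ is the usual geometric-series bookkeeping, valid with an absolute implied constant on the compact interval $[0, 8/71]$.

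In short, no fresh argument is required at this stage and no real obstacle arises: the corollary is a pure substitution, and the entire substance --- zero-detecting sums, Halász' kernel method, and the exploitation of the integrality and Ramanujan hypotheses --- sits inside the proof of Theorem \ref{ThmGeneral}. The only task the corollary adds is the arithmetic check $8/71 < 4/29$ that pins down the first branch as the governing case, together with highlighting the near-Density-Hypothesis shape $B_\B(\eta) = 2 + O(\eta)$ that emerges in the maximally favorable regime $\theta = 0$.
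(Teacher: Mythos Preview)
Your main argument is correct and matches the paper's approach exactly: the corollary is a pure specialization of Theorem~\ref{ThmGeneral} at $\theta=0$, and your arithmetic check that $8/71<4/29$ picks out the first branch is all that is needed.

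One correction to your side remark on the Ramanujan condition: integrality in the sense of the paper means only that the norm $|\cdot|:\G\to\NN$ takes integer values, not that the map is injective, so several Beurling integers can share the same norm and $G(\nu)\in\{0,1\}$ need not hold (think of ideals in a number ring, or a prime system with repeated values). The reason the Ramanujan condition is nonetheless automatic here is different: Axiom~A with $\theta=0$ gives $\N(x)=\kappa x+O(1)$, whence $G(\nu)=\N(\nu)-\N(\nu-1)=\kappa+O(1)$ is bounded, which is far stronger than Ramanujan. So your conclusion that the omission is harmless stands, but for this reason rather than the one you gave.
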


\section{The main tool}\label{sec:maintool}

Our main tool is a slightly strengthened variant of a general zero density type theorem proved recently by the first named author in \cite{Pintz2024}. This deals with general (but ordinary, i.e. with integer powers in the denominator) Dirichlet series.

Assuming $f_1 \neq 0$, we consider the (Dirichlet inverse to each other) arithmetical functions $f_n$ and $g_n$, both satisfying the Ramanujan condition, and the corresponding reciprocal pair of Dirichlet series
\begin{equation}
\label{eq:2.1}
f(s) = \sum_{n = 1}^\infty \frac{f_n}{n^s}, \quad M(s) = \frac1{f(s)} = \sum_{n = 1}^\infty \frac{g_n}{n^s} ,
\end{equation}
which by assumption about the Ramanujan condition must be analytic for $\sigma > 1$, and satisfy $M(s)f(s) = 1$ there.

\begin{remark}\label{rem:1}
If $f_n$ is completely multiplicative as a function of $n$ then $f_1 = 1$ and $g(n) = \mu(n)f_n$.
\end{remark}

Further we suppose that with some fixed constant $\alpha_f < 1$, the function $f(s)$ can be continued analytically to the halfplane $\sigma > \alpha_f$, save a simple pole at $s = 1$ with residue $f_0$. We define the respective "generalized Lindelöf function" to any such $f$ as follows.
\begin{equation}
\label{eq:2.3}
\mu_f(\sigma_0) := \text{\rm inf} \left\{ \mu; |f(\sigma + it)| \leq T^\mu \text{ for } \sigma \geq \sigma_0, 1 \leq |t| \leq T\right\} < \infty \text{ for } \sigma_0 > \alpha_f.
\end{equation}
This is clearly Lindel\"of's classical $\mu$-function if $f(s)$ is chosen to be $\zeta(s)$.
In the following technical definition the function $\lambda_f^{(0)}$ will depend on $\mu_f$; in particular, $\lambda_\zeta^{(0)}$ on $\mu_\zeta$.
Let
\begin{equation}
\label{eq:2.4}
\lambda_f^{(0)}(\eta)\! := \!\! \inf_{0<a; (a + 1)\eta < 1 - \alpha_f} \frac{\mu_f(1\! -\!
(a\! +\! 1)\eta)}{a\eta},
\quad \lambda_\zeta^{(0)}(\eta) \! := \!\! \inf_{0 <b} \frac{\mu_\zeta(1\! -\!
(b\! +\! 1)\eta)}{b\eta}.
\end{equation}
Note that we do not assume anything about the size of the parameter $b$ in the above definition for $\lambda_\zeta^{(0)}(\eta)$; this is the essential change compared to the original version of the main auxiliary theorem what we want to use.

In the following we do not need \emph{the exact} values of these Lindelöf-type derived functions, (which is rather fortunate, given that not even the precise value of $\mu_\zeta$ is known), but we will be satisfied with any function $\lambda_f(\eta) \ge \lambda^{(0)}_f(\eta)$, in particular with $\lambda_\zeta(\eta) \ge \lambda^{(0)}_\zeta(\eta)$. Even if precise values are not known, strong estimates on the growth of the Riemann zeta function $\zeta$ will furnish us reasonably strong bounds $\lambda_\zeta(\eta)$.

As is usual, we denote the number of zeroes (counting them with their possible multiplicities) as
\begin{equation}\label{Zeronumbersforf}
N_f(1-\eta,T):= \#\{\rho~:~ f(\rho)=0, \Re \rho \in [1-\eta,1], |\Im \rho|\le T\}.
\end{equation}
The general density estimate, what we apply in this work, reads as follows.

\begin{lemma}\label{Thm:Pintzdensity} Let $f$ and $g$ be reciprocal Dirichlet series satisfying the above assumptions and the Ramanujan condition $\log|f_n|=o(\log n)$ and $\log|g_n|=o(\log n)$.

Let $0<\eta<1-\alpha_f$ be arbitrary, and let $\lambda_f(\eta)$ and $\lz(\eta)$ be some estimator functions satisfying $\lambda_f(\eta) \ge \lambda_f^{(0)}(\eta)$ and $\lz(\eta) \ge \lz^{(0)}(\eta)$, where $\lambda_f^{(0)}(\eta), ~\lz^{(0)}(\eta)$ are defined according to \eqref{eq:2.3} and \eqref{eq:2.4}.
Denote
\begin{equation}\label{Betadef}
B_f(\eta):= \max\left( 2\lambda_f(\eta), 4\lz(2\eta) \right) .
\end{equation}

Then if the condition
\begin{equation}\label{crucialcondition}
\lambda_f(\eta) \ge \lambda_\zeta(2\eta)
\end{equation}
holds true, then we have the zero density estimate
\begin{equation}\label{Pintzdensity}
N_f(1-\eta,T) \ll_{\eta,\ve} T^{B_f(\eta)\eta+\ve}.
\end{equation}
\end{lemma}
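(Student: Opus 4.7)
\medskip
\noindent\textbf{Proof proposal.} The plan is to follow the zero-detecting sum framework of Pintz \cite{Pintz2024}, keeping careful track of the single place where the restriction on the parameter $b$ in the definition of $\lambda_\zeta^{(0)}$ was originally used, and to show that the added hypothesis \eqref{crucialcondition} allows us to drop it. First, I would fix a nontrivial zero $\rho = \beta + i\gamma$ of $f$ with $\beta \geq 1 - \eta$ and $|\gamma|\le T$ and build a detector identity as follows. For a smoothing kernel with Mellin transform $K(s)$ (the $\Gamma$-kernel is convenient) and the truncated inverse $M_Y(s) = \sum_{n\le Y} g_n n^{-s}$, Perron inversion gives
\[
\sum_{n\ge 1} a_n\, k(n/X)\, n^{-\rho} \;=\; \frac{1}{2\pi i}\int_{(c)} f(s+\rho)\, M_Y(s+\rho)\, K(s)\, X^s\, ds, \qquad a_n := \sum_{\substack{d\mid n\\ d\le Y}} g_d\, f_{n/d}.
\]
By Dirichlet inversion $a_1=1$ and $a_n=0$ for $2\le n\le Y$. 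Shifting the contour from $\Re s = c$ to a line $\Re s = -(a+1)\eta$, the pole at $s=0$ contributes $0$ (because $f(\rho)=0$), while the pole of $f$ at $s=1-\rho$ yields a main term which, together with the Ramanujan condition on $g_n$, is negligible for a suitable choice of $X$ and $Y$. What remains is a ``detector inequality''
\[
1 \;\ll\; \bigl|V_1(\rho)\bigr| + \bigl|V_2(\rho)\bigr|,
\]
where $V_1$ is a smooth Dirichlet polynomial over $Y < n \le N_1$ of moderate length and $V_2$ is a longer sum coming from the shifted integral, whose size is governed by the growth of $f$ on the new line, i.e.\ by $\mu_f(1-(a+1)\eta)$.

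Second, I would pass to a well-spaced subset of zeros (with imaginary parts separated by at least one), losing only a $\log T$ factor, and for each of $V_1, V_2$ count the number of well-spaced points at which it exceeds its threshold. This is where Hal\'asz' method enters: for a Dirichlet polynomial $D(s)=\sum b_n n^{-s}$ of length $N$ with $\sum|b_n|^2 \ll G$, the number $R$ of well-spaced points with $|D(\sigma+it)|\ge V$ satisfies the standard Hal\'asz--Montgomery bound
\[
R \;\ll\; \bigl(G\, N^{1-2\sigma} + G^{2}\, T\, N^{-4\sigma}\bigr)\, V^{-2}\, \log^{O(1)}\!T.
\]
The mean-value factor $G$ is controlled via the identity $|f|^{2}$ at abscissa $1-2\eta$: pointwise Ramanujan on $f_n, g_n$ together with the classical mean-value theorem for Dirichlet polynomials reduces the second-moment bound to a bound on $|\zeta(1-2\eta+it)|^{2}$, which is where the second generalized Lindel\"of function $\lambda_\zeta$ at the doubled argument $2\eta$ appears. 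Optimising the choice of $a$ in the definition of $\lambda_f^{(0)}$ and $b$ in the definition of $\lambda_\zeta^{(0)}$ balances the two terms in the Hal\'asz bound and produces, after the inevitable bookkeeping, the pair of exponents $2\lambda_f(\eta)\eta$ and $4\lambda_\zeta(2\eta)\eta$ in the two regimes; taking the worse yields $B_f(\eta)\eta$ as claimed.

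The main obstacle, and the real content of the ``slight strengthening,'' is to justify dropping the side condition on $b$ that appeared in the original statement of \cite{Pintz2024}. In Pintz's version, $b$ was constrained so that the abscissa $1-(b+1)(2\eta)$ used for $\zeta$ did not lie to the left of the abscissa $1-(a+1)\eta$ used for $f$; otherwise the mean-value comparison $|f|^{2}\leftrightarrow|\zeta|^{2}$ would be carried out on an abscissa on which the bound for $f$ is weaker than the bound for $\zeta^{2}$, breaking the optimisation. Under hypothesis \eqref{crucialcondition}, however, the contribution controlled by $\lambda_f(\eta)$ already dominates any potential loss incurred by choosing a deeper $b$, so the optimisation remains consistent and no artificial restriction on $b$ is needed. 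Once this point is checked, the rest of the argument — the contour shift, the well-spacing reduction, the application of Hal\'asz' large-values inequality, and the combination into a single exponent — proceeds essentially as in \cite{Pintz2024}, and delivers the bound $N_f(1-\eta,T)\ll_{\eta,\varepsilon} T^{B_f(\eta)\eta+\varepsilon}$.
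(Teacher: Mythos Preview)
Your outline of the zero-detecting/Hal\'asz framework is a reasonable reconstruction of the argument in \cite{Pintz2024}, and the paper's own proof is in the same spirit: it simply invokes Theorem~2 (formula~(2.9)) of \cite{Pintz2024} and records that three minor alterations go through. So at the level of strategy you are aligned with the paper.

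There is, however, one concrete point where your explanation diverges from the paper's and does not appear to be correct: the reason the upper bound on $b$ in the definition of $\lambda_\zeta^{(0)}$ can be dropped. You claim the original restriction tied the $\zeta$-abscissa $1-(b+1)(2\eta)$ to the $f$-abscissa $1-(a+1)\eta$, and that the new hypothesis \eqref{crucialcondition} is what licenses its removal. But \eqref{crucialcondition} is not new: it was already present in \cite{Pintz2024} (as a strict inequality, relaxed here to $\ge$ per the paper's point~(i)), so it cannot be the ingredient that unlocks an unrestricted~$b$. The paper's actual justification is purely analytic and has nothing to do with \eqref{crucialcondition}: the original bound on $b$ was there only to keep the shifted contour inside a halfplane where one has analytic continuation and polynomial growth; since the Riemann $\zeta$ function enjoys both on all of $\CC$, no constraint on $b$ is needed and the original proof runs unchanged for larger~$b$. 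Your account of the ``real content of the slight strengthening'' therefore misidentifies the mechanism.

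A smaller remark: the large-values inequality you display, $R \ll (G\,N^{1-2\sigma}+G^2 T N^{-4\sigma})V^{-2}\log^{O(1)}T$, is not the standard Hal\'asz--Montgomery shape (which typically involves $N+T$ and an $R^{1/2}$ self-reference, with the $\zeta$-contribution entering through the bilinear kernel). Since the paper defers the details entirely to \cite{Pintz2024}, this does not affect the lemma here, but if you intend to spell out the argument you should state the inequality in its correct form.
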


\begin{proof}
This is essentially a part of Theorem 2 in \cite{Pintz2024}, listed under formula (2.9) there.

However, here we formulated  an alteration in that we changed also the definition of $\lambda_\zeta^{(0)}(\eta)$ somewhat, allowing the $b$-parameter, originally restricted to satisfy some upper bound in \cite{Pintz2024}, become unbounded here.

It is clear from the original proof that
\begin{itemize}
  \item[(i)] we can allow $\lambda_f(\eta)=\lambda_\zeta(2\eta)$, too (no need for strict inequality, as in the original formulation);
  \item[(ii)] we can allow in the definition of the original $\lambda_f(\eta)$ function (which corresponds to $\lambda_f^{(0)}(\eta)$ here) any upper estimation, as we formally described above in detail;
  \item[(iii)] when defining $\lambda_\zeta^{(0)}(\eta)$, actually there is no need for any upper bound for $b$, as the original proof works without change even for larger values of $b$, given the analytic continuation (and known bounds of the growth) of the Riemann $\zeta$ function on the whole complex plane.
\end{itemize}

With these alterations we obtain a proof of the Lemma.
\end{proof}

To obtain somewhat sharper results for larger values of $\eta$ in our present work, we needed this change for two reasons. First, the possibility of allowing $\eta$ to extend up to the theoretical bound of $1-\theta$, we need good estimates on $\lambda_\zeta(2\eta)$ in order to meet the condition \eqref{crucialcondition} for $f=\zB$. Without this amelioration, the validity of the condition would not be ascertained for $\eta$ exceeding certain bounds. Second, with the above amelioration, also the bound \eqref{Betadef} improves, providing somewhat better density results for larger values of $\eta$.


\section{The proof of Theorem \ref{ThmGeneral}}\label{sec:proofThm}

\subsection{Computing estimates for the estimator $\lz(2\eta)$ of the Riemann $\zeta$ function}\label{sec:lambdazeta}

We start with finding a sufficiently good estimator function $\lz$ (which we will later use at the value of $2\eta$, not at $\eta$). Note that here we deal with the Riemann $\zeta$ function, so that there is no upper bound on the admissible values of $\eta$; however, as for the Beurling zeta function we have the natural bound $0<\eta<1-\theta\le 1$, we do not extend over $0<\eta<1$ (or, if considering values at $2\eta$, we restrict equivalently for $0<\eta<1/2$).

We work with two different choices of the appropriate parameter $b$, depending on the size of $\eta$. More precisely, for $\eta\le 4/29$ we will use Bourgain's estimate \eqref{Bourgain} with choosing $b:=\frac{1}{4\eta}-1 \Longleftrightarrow (b+1)2\eta=1/2$, which leads to $\mu_\zeta(1-(b+1)2\eta))= \mu_\zeta(1/2)\le 13/84$ and hence the estimator function
\begin{equation}\label{lambdazetaforsmalleta}
\lz(2\eta)=\frac{13/84}{1/2-2\eta}=\frac{13/42}{1-4\eta},
\end{equation}
obviously satisfying $\lz(2\eta)\ge \lz^{(0)}(2\eta)$ for this range of values of $\eta$.

If $4/29 \le \eta \le 1/4$, then we will use the basic estimate $\mu_\zeta(0)=1/2$. Choosing the value $b:=\frac{1}{2\ee}-1 \Longleftrightarrow (b+1)2\eta=1$ in this case, we obtain from \eqref{eq:2.4} that we can take
\beq\label{lambdazeta2eta}
\lz(2\eta)=\frac{1/2}{1-2\eta}.
\eeq


If $\eta>1/4$ then $\lambda_\zeta (2\eta)>1$, hence by \eqref{Betadef} $B_\B(\eta) \ge 4 \lz (2\eta)>1/\eta$, which is weaker then the trivial estimate $1$, see Section \ref{sec:conclusion}.

\medskip
\subsection{Estimator for the Beurling zeta function $\zB$}

For the Beurling zeta function of our Beurling system $\B$, we cannot take values with $\Re s\le \theta$, so that we must restrict to $\eta<1-\theta$, and in the construction of $\lB^{(0)}(\eta)$ and $\lB(\eta)$ even to $1-(a+1)\eta >\theta$, i.e. $(a+1)\eta<1-\theta$.

We will extend $(a+1)\eta$ close to this limit. A reference to Lemma 5 of \cite{Rev-Arxiv} (or, with full proof, to Lemma 2.5 of \cite{Rev-MP}) furnishes
\beq\label{mubetaepsilontheta}
\mu_{\zB}(\theta+\ve)\le \frac{1-\theta+\ve}{1-\theta},
\eeq
which in turn allows to set $a:=\frac{1-\theta-\ve}{\eta}-1 \Longleftrightarrow (a+1)\eta=1-\theta-\ve$ and derive with this value the estimator functions
$$
\lB^{(\ve)}(\eta) = \frac{1-\theta+\ve}{(1-\theta)(1-\theta-\eta-\ve)}.
$$
As $\lB^{(\ve)}(\eta) \ge \lB^{(0)}(\eta)$ holds for all $\ve>0$, we in fact can take limits with respect to $\ve\to 0$ and obtain the new estimator function
\beq\label{lambdaetatheta}
\lB(\eta) = \frac{1}{1-\theta-\eta}.
\eeq

Unlike the estimator functions for the Riemann $\zeta$ function, where we distinguished two different cases with two different estimates \eqref{lambdazetaforsmalleta} and \eqref{lambdazeta2eta},
the estimate \eqref{lambdaetatheta} is "universal" (the same formula) for all admissible values of $0<\eta<1-\theta$.

\medskip
\subsection{Verification of the conditions of the key lemma}

Next we check the conditions of the key lemma in order to apply it for admissible values of $\eta$. We always assume that $\eta<1-\theta$ without repeatedly telling about it.

For the small values $0<\eta\le 4/29$, an easy calculation furnishes that $\lB(\eta)=\frac{1}{1-\theta-\eta} \ge \frac{1}{1-\eta} \ge \frac{13/42}{1-4\eta} =\lz(2\eta)$, the condition in \eqref{crucialcondition}. Let now $4/29 \le \eta \le 1/4$. Then we find $\lB(\eta)=\frac{1}{1-\theta-\eta}\ge \frac{1}{1-\eta} \ge \frac{1}{2-4\eta}=\lz(2\eta)$, that is, we again have condition \eqref{crucialcondition} of Lemma \ref{Thm:Pintzdensity} satisfied.

In all, we can record that with the estimator functions worked out above for the Riemann $\zeta$ and the Beurling zeta $\zB$, the last condition  \eqref{crucialcondition} of Lemma \ref{Thm:Pintzdensity}, i.e. the inequality  $\lB(\eta) \ge \lz(2\eta)$, holds true for the entire domain $0<\eta<1-\theta$.

\medskip
\subsection{End of the proof of Theorem \ref{ThmGeneral}}
Therefore, for all values of $0<\eta<1-\theta$, we can apply Theorem A, which yields
\beq\label{almostfinalformsalleta}
B_{\B}(\eta)= \max\left( 2\lB(\eta), 4\lz(2\eta) \right)=\begin{cases}
\max\left(\frac{2}{1-\theta-\eta}, \frac{26/21}{1-4\eta} \right) \qquad &\text{if}\quad \eta\le \min(4/29, 1-\theta) \\
\max\left(\frac{2}{1-\theta-\eta}, \frac{2}{1-2\eta} \right) \qquad &\text{if}\quad 4/29 \le \eta \le \min(1/4, 1-\theta)
\end{cases}.
\eeq

In case of $\eta \le 4/29$, the first term is maximal precisely when $\eta \le (8+13\theta)/71$.

In case of $4/29 \le \eta \le 1/4$, the first term gives the maximum if and only if $\eta \le \theta$. 

Winding up these partial case calculations furnishes \eqref{BetaforBeurling}.

$\square$

\section{Conclusion}\label{sec:conclusion}

Contrary to zeta functions with a functional equation, 
there is no asymptotic formula for the number of zeroes of a Beurling zeta function in the critical strip or in a halfplane strictly in the critical strip. This is not just a weakness of methods for a proof; Beurling systems with only finitely many zeroes (or no zeroes at all) do exist, see \cite{Rev-One}, Theorem 7.4. (though one may recall that these phenomenon is known only for $\theta \ge 1/2$ and strips $\Re s\ge \sigma >\theta$). Nevertheless, an upper estimate of the same order of $T\log T$ as in the Riemann zeta case, is known, see e.g. (7) of Theorem 2 from \cite{DMV} or Lemma 3.5 of \cite{Rev-MP}. Therefore, a density bound exceeding $T^{1+\ve}$ does not provide anything new. In light of this the part for $\eta \ge \frac{1-\theta}{3}$ of the above results become worthless: they can be substituted by the better $B_\B^{\star}=1/\eta$ (equivalent to $N(\sigma,T)\ \ll T^{1+\ve}$ for all $\ve>0$). Also, one may note that our above obtained bounds become inferior to the recent results of Broucke \eqref{Broucke2024} from \cite{Frederikms} for $\eta>\frac{1-\theta}{4}$. However, for $\eta<\frac{1-\theta}{4}$, our results provide sharper estimates, even though only under the two assumptions on the integrality and the Ramaujan condition.

\bigskip

\noindent
\hspace*{5mm}
\begin{minipage}{\textwidth}
\noindent
\hspace*{-5mm}
János {} Pintz\\
HUN-REN Alfréd Rényi Institute of Mathematics\\
Reáltanoda utca 13-15\\
1053 Budapest, Hungary \\
{\tt pintz@renyi.hu}
\end{minipage}

\bigskip
\bigskip

\noindent
\hspace*{5mm}
\begin{minipage}{\textwidth}
\noindent
\hspace*{-5mm}
Szilárd Gy.{} Révész\\
HUN-REN Alfréd Rényi Institute of Mathematics\\
Reáltanoda utca 13-15\\
1053 Budapest, Hungary \\
{\tt revesz.szilard@renyi.hu}
\end{minipage}


\begin{thebibliography}{AAAA}


\bibitem{Beur}
{\sc A. Beurling}, Analyse de la loi asymptotique de la
distribution des nombres premiers g\'en\'eralis\'es I. \emph{Acta
Math.} {\bf 68} (1937), 255--291.

\bibitem{BL1914} {\sc Bohr H, Landau E.}
Sur les zeros de la fonction $\zeta(s)$ de Riemann.
\emph{Comptes Rendus Acad. Sci. Paris}, {\bf 158} (1914), 106--110.

\bibitem{Bourgain2017} {\sc J. Bourgain}, Decoupling, exponential sums and the Riemann zeta function. \emph{J. Amer. Math Soc.} {\bf 30} (2017), no. 1, 205--224.

\bibitem{Frederikms} {\sc F. Broucke}, A new zero-density estimate for Beurling zeta
functions, \emph{manuscript}, see at {\tt https://cage.ugent.be/~fabrouck/publications.html}.

\bibitem{BrouckeDebruyne}{\sc F. Broucke, G. Debruyne}, On zero-density estimates and the PNT in short intervals for Beurling generalized numbers. \emph{Acta Arith.} {\bf 207} (2023), no. 4, 365--391.

\bibitem{abc} {\sc F. Broucke, G. Debruyne, Sz. Gy. R\'ev\'esz}, Some examples of well-behaved Beurling number systems, \emph{Trans. Amer. Math. Soc.} to appear, see also as Arxiv Preprint arXiv:2309.01567.

\bibitem{Carlson} {\sc Carlson},
\"Uber die Nullstellen der Dirichletschen Reihen und der
Riemannschen $\zeta$-Funktion, {\it Arkiv f. Mat., Astroch. Fys.}
{\bf 15} Nr. 20 (1920), 1--18.

\bibitem{DMV}{\sc H. G. Diamond, H. L. Montgomery  } and {\sc U. Vorhauer},
Beurling primes with large oscillation, \emph{Math. Ann.}, {\bf 334} (2006) no. 1,  1--36.

\bibitem{DZ-16} {\sc H. G. Diamond} and {\sc Wen-Bin Zhang}, \emph{Beurling generalized numbers},
Mathematical Surveys and Monographs, 213. American Mathematical Society, Providence, RI, 2016. xi+244 pp.

\bibitem{Hal-Mittelwerte}{\sc G. Halász}, Über die Mittelwerte multiplikativer zahlentheoretischer Funktionen, \emph{Acta Math. Hungar.} {\bf 19} (1968), 365--404.

\bibitem{Hal-Tur-I}{\sc G. Halász and P. Turán}, On the distribution of roots of Riemann zeta and allied functions, I, \emph{J. Number Theory} {\bf 1} (1969), 121--137.

\bibitem{Ing1937}
{\sc A. E. Ingham}, On the difference between consecutive primes.
\emph{Quart. J. Math. Oxford Ser.}, {\bf 8} (1937), 255--266.

\bibitem{K-99} {\sc J.-P. Kahane},
Un théoreme de Littlewood pour les nombres premiers de Beurling. (French) [A Littlewood theorem for Beurling primes]
\emph{Bull. London Math. Soc.} {\bf 31} (1999), no. 4, 424--430.

\bibitem{Knopf} {\sc J. Knopfmacher},  {\em Abstract analytic number theory}, North Holland \& Elsevier, Amsterdam--Oxford \& New York, 1975.  (Second edition: Dover Books on Advanced Mathematics. Dover Publications, Inc., New York, 1990. xii+336 pp.)

\bibitem{Lan1913}
{\sc E. Landau}
Gel\"oste und ungel\"oste Probleme aus der Theorie
der Primzahlverteilung und der Riemannschen Zetafunktion.
In: Proc. 5th Internat Congress of Math.
Cambridge: Cambridge University Press, 1913 Vol. 1, 93--108


\bibitem{Pintz2024} {\sc J. Pintz}, On a general density theorem, \emph{Acta Arith.} Published online: 15 April 2024. {\tt DOI: 10.4064/aa230706-2-3}.

\bibitem{Rev-MP} {\sc Sz. Gy. R\'ev\'esz}, A Riemann-von Mangoldt-type formula for the distribution of Beurling primes. {\it Math. Pann.}, New Series {\bf 27} /NS {\bf 1}/ (2021) 2, 204--232.

\bibitem{Rev-D} {\sc Sz. Gy. R\'ev\'esz}, Density theorems for the Beurling zeta function. \emph{Mathematika}, {\bf 68} (2022), 1045--1072. See at {\tt http://doi.org/10.1112/mtk.12156}.

\bibitem{Rev-One} {\sc Sz. Gy. R\'ev\'esz}, Oscillation of the remainder term in the prime number theorem of Beurling, ``caused by a given $\zeta$-zero". \emph{Int. Math. Res. Not. IMRN} (2023), no. 14, 11752–11790.

\bibitem{Rev-Many} {\sc Sz. Gy. R\'ev\'esz}, The method of Pintz for the Ingham question about the connection of distribution of $\zeta$-zeros and order of the error in the PNT in the Beurling context. \emph{Michigan Mathematical Journal}, (Advance Publication)
{\tt https://doi.org/10.1307/mmj/20226271}.

\bibitem{Rev-Arxiv} {\sc Sz. Gy. R\'ev\'esz}, The Carlson-type zero-density theorem for the Beurling zeta function, \emph{submitted}, ArXiv Preprint arXiv:2209.01689.

\bibitem{Tur1953}
{\sc P. Tur\'an},
\emph{Az anal\'{\i}zis egy \'uj m\'odszer\'er\H{o}l \'es annak egyes alkalmaz\'asair\'ol} (in Hungarian) \emph{[On a new method in analysis and on some of its applications].}
Budapest, Akad\'emiai Kiad\'o, 1953.

\bibitem{Tur1954}
{\sc P. Tur\'an},
On the roots of the Riemann zeta function (in Hungarian).
\emph{Magyar Tud. Akad. Mat. Fiz. Oszt. K\"ozl.}, {\bf 4} (1954) 357--368.

\bibitem{Tur1984}
{\sc P. Tur\'an},
\emph{On a new method of analysis and its applications},
New York: John Wiley \&\ Sons, Inc., 1984.

\end{thebibliography}
\end{document}